\documentclass{amsart}
\usepackage{mathrsfs}
\usepackage{color}
\usepackage{bm}
\usepackage{amsfonts,amssymb}
\usepackage{dsfont}
\usepackage{extarrows}
\usepackage{amsmath}
\usepackage{enumerate}
\usepackage{amscd}
\usepackage[all]{xy}
\usepackage[pagebackref,colorlinks]{hyperref}

\newtheorem{definition}{Definition}[section]

\newtheorem{corollary}{Corollary}[section]

\newtheorem{theorem}{Theorem}[section]





\newcommand{\bzm}{\begin{proof}[\indent\biaosong证明]}
\newcommand{\ezm}{\end{proof}}

\theoremstyle{remark}

\begin{document}

\title[A Kobayashi and Bergman complete domain]{A Kobayashi and Bergman complete domain without bounded representations}
\date{}
\author[N. Shcherbina]{Nikolay Shcherbina}
\address{Nikolay Shcherbina: Department of Mathematics, University of Wuppertal, 42119, Wuppertal, Germany}
\email{shcherbina@math.uni-wuppertal.de}
\author[L. Zhang]{Liyou Zhang}
\address{Liyou Zhang: School of Mathematical Sciences, Capital Normal University, 100048, Beijing, P. R.  China}
\email{zhangly@cnu.edu.cn}

\begin{abstract}
We construct an unbounded strictly pseudoconvex Kobayashi hyperbolic and complete domain in $\mathbb{C}^2$, which also possesses complete Bergman metric, but has no nonconstant bounded holomorphic functions.
\end{abstract}
\maketitle

{A question of existence of the (complete) Bergman metric on a given complex manifold $X$ is one of the important problems of  complex analysis.} When $X=\Omega$ is a bounded domain in $\mathbb{C}^n$, it is well-known that the Bergman metric {on $\Omega$} exists. If $\Omega$ is further assumed to be bounded pseudoconvex with $\mathcal{C}^1$-smooth boundary, then {the  Bergman completeness of $\Omega$ was proved by Ohsawa in \cite{Ohs1981} using} the   {celebrated Kobayshi's criterion \cite{Kobayashi} (Here the Bergman completeness means that $\Omega$ is a complete metric space with respect to the Bergman metric).} Later on, {several} articles appeared concerning the Bergman completeness for bounded pseudoconvex domains in $\mathbb{C}^n$ (see \cite{ChZh2000, Zwonek} and references therein). Among them, it is worth to mention the papers of Blocki-Pflug \cite{BP} and Herbort \cite{Her} were independently a long-standing question of Bergman completeness for any bounded hyperconvex domain has been solved (by a hyperconvex domain we mean here that {it has a bounded continuous plurisubharmonic exhaustion function). A few years later this result was further generalized by Chen \cite{Chen2003} to the case of hyperconvex manifolds.}

When $X$ is an unbounded domain, or, {more generally,} a complex manifold, {then not so much} is known for the existence of the (complete) Bergman metric except the early work of Greene-Wu \cite{GW}, which asserts that a simply-connected complex manifold possesses a complete Bergman metric if it carries a complete K\"ahler metric with holomorphic sectional curvature {which is} negatively pinched. It is a bit surprising that such lower negative bound was dropped by Chen and Zhang in \cite{ChZh2002}, where the main ingredient used in the proof is the pluricomplex Green function (see Section \ref{Completeness of the Bergman metric} below for the definition)  and the $L^2$-method for the $\bar{\partial}$-equation. In particular, they proved that a Stein manifold $X$ possesses a Bergman metric, provided that $X$ carries a bounded continuous strictly plurisubharmonic function. Recently, a new characterization {for} the existence of the Bergman metric on unbounded domains was given by Gallagher, Harz and Herbort in \cite{GHH}. It was proved there that a pseudoconvex domain with empty core (see Section \ref{Existence of the Bergman metric} below for the definition) possesses a Bergman metric.

Some other {conditions (for certain unbounded $X$) which are sufficient for possessing} a (complete) Bergman metric are also scattered in the literatures, see for examples \cite{AGK}, \cite{ChenKamimotoOhsawa04}, \cite{PZ2005}, \cite{Sh2019} et al.

\vspace{2mm}

A complex manifold $X$ is called (Kobayashi) hyperbolic if its Kobayashi pseudodistance ${\kappa}_X$ is a distance {(see Section \ref{Kobayashi completeness}  below for definitions)}. We say that $X$ is complete hyperbolic, if $(X, {\kappa}_X)$ is a complete metric space. For example, all bounded domains in $\mathbb{C}^n$ are hyperbolic, while complex manifolds containing entire curves are not. {The question of hyperbolicity has been intensively studied in the case of compact complex manifolds. Nevertheless,} there are many interesting and quite long standing conjectures in that context which are still open in their complete form, such as the Kobayashi conjecture and Green-Griffiths-Lang conjecture \cite{Demailly-Kob}.

In the case of open complex manifolds, the questions related to the Kobayashi hyperbolicity are rather different. There are various characterizations of the hyperbolicity. For examples, an analytic description due to Sibony \cite{Sibony1981} says that a complex manifold $X$ carrying a bounded strictly plurisubharmonic function is Kobayashi hyperbolic. In \cite{Abate1993}, Abate proved that a complex manifold $X$ is Kobayashi hyperbolic if and only if the space of holomorphic maps from the unit disc $\Delta$ to $X$ is relatively compact (with respect to the compact-open
topology) in the space of continuous maps from $\Delta$ into the one point compactification $X^*$ of $X$. Later, Gaussier \cite{Gaussier1999} {gave sufficient conditions for hyperbolicity of an unbounded domain}
in terms of the existence of peak and antipeak functions at infinity.  In \cite{NP2005}, Nikolov and Pflug {found} some conditions at infinity which guarantee the hyperbolicity of unbounded domains. {They also obtained} a characterization of hyperbolicity in terms of asymptotic behavior of the Lempert function. Recently, Gaussier and Shcherbina \cite{GSh2019} gave a new sufficient condition for Kobayashi hyperbolicity of unbounded domains in $\mathbb{C}^n$  using a concept of strong antipeak plurisubharmonic function at infinity. For more information on the progress in this direction (conditions which characterize (complete) Kobayashi hyperbolic (open) manifolds) we refer the reader to the survey paper of Gaussier \cite{Gaussier2017}.

In the present paper, in contrast to the situation treated by Chen in \cite{Chen2003} and Sibony in \cite{Sibony1981} (when the existence of a bounded continuous strictly plurisubharmonic function was assumed), we consider both the Bergman completeness and the Kobayashi hyperbolicity for unbounded domains in $\mathbb{C}^n$ which have neither bounded smooth strictly plurisubharmonic functions nor nonconstant bounded holomorphic functions. More precisely, we prove here the following result.

\vspace{0.3cm}
\noindent
{\bf Main Theorem.}
{{\em There exists an unbounded strictly pseudoconvex domain $\mathfrak{A}\subset\mathbb{C}^2$ with smooth boundary which has the following properties:
 \begin{enumerate}
	\item $\mathfrak{A}$ possesses a complete Bergman metric.
	\item $\mathfrak{A}$ is Kobayashi hyperbolic and complete.
	\item $\mathfrak{A}$ possesses neither nonconstant bounded holomorphic functions, nor continuous bounded strictly plurisubharmonic functions.
	\item  $\mathfrak{A}$ is not Carath\'eodory hyperbolic.
	\item  The core $\mathfrak c(\Omega)$ of $\Omega$ is nonempty, but the core $\mathfrak c'(\Omega)$ is empty.
\end{enumerate}}
{\rm (See Section \ref{Existence of the Bergman metric} below for the definitions of the cores $\mathfrak c(\Omega)$ and $\mathfrak c'(\Omega)$).}


\vspace{0.3cm}
The construction of $\mathfrak{A}$ is motivated by \cite{GSh2019}, where a Kobayashi hyperbolic Model domain with a nonempty core has been constructed. Using a characterization of the Bergman space in terms of the core (see \cite[Remark 7(b)]{GHH}), we show the existence of the Bergman metric on $\mathfrak{A}$. For the completeness of this metric, we apply a criterion given by Chen \cite[Theorem 1.1]{Chen2013}, which uses} the asymptotic behavior at infinity of the volumes of sublevel sets of the pluricomplex Green function. The completeness of Kobayashi metric follows from a criterion by Nikolov and Pflug \cite[Proposition 3.6]{NP2005}, which identifies the Kobayashi completeness and the local Kobayashi completeness in the case when there are no Cauchy sequences (w.r.t. Kobayashi metric) converging to infinity. For the nonexistence of bounded holomorphic functions on $\mathfrak{A}$, we use an argument similar to the Liouville type theorem proved in \cite[Theorom 2.2]{HST-JGA}, which says that any continuous bounded plurisubharmonic function defined in a neighborhood of a Wermer type set $\mathcal{E} \subset \mathfrak{A}$ is constant on $\mathcal{E}$.

\section{Construction of a special Wermer type set in $\mathbb{C}^2$} \label{Construction of a special Wermer type set}

Let $\{a_n\}$ be the enumeration of points running through the set $\mathbb{Z}+i\mathbb{Z} \subset \mathbb{C}_z$ as follows:
$(0,0)\to (1,0)\to (1,1)\to (0,1) \to (-1,1) \to (-1,0) \to (-1,-1) \to (0,-1)\to (1,-1) \to (2,-1) \to (2,0)\to \cdots.$

Let $\{\varepsilon_n\}$ be a decreasing sequence of positive numbers converging to zero very fast that will be further specified later. Then, as in \cite{HST}, for each $n\in \mathbb{N}$, we consider the set
\begin{align}\label{eq1}
E_n:=\left\{(z,w)\in \mathbb{C}^2: w=\varepsilon_1\sqrt{z-a_1} + ... + \varepsilon_n\sqrt{z-a_n}\right\}.
\end{align}
By definition, $\sum_{k=1}^{n}\varepsilon_k\sqrt{z-a_k}$ is a multi-valued function that takes $2^n$ values at each point $z\in\mathbb{C}$
(counted with multiplicities). Therefore, for each $z \in \mathbb{C} \setminus (\mathbb{Z}+i\mathbb{Z})$ we can locally choose single-valued functions $w_1^{(n)}(z), w_2^{(n)}(z), \cdots, w_{2^n}^{(n)}(z)$
such that

$$\sum_{k=1}^{n}\varepsilon_k\sqrt{z-a_k} = \left\{w_k^{(n)}(z): k=1,2, ... , 2^n \right\}.$$

\noindent
For every  $n\in \mathbb{N}$, we define a function $P_n: \mathbb{C}^2 \to \mathbb{C}$ as

$$P_n(z,w):=\left(w-w_1^{(n)}(z)\right)\left(w-w_2^{(n)}(z)\right)\cdots\left(w-w_{2^n}^{(n)}(z)\right).$$

\noindent
Then each $P_n$ is a well defined holomorphic polynomial in $z$ and $w$ (see for details \cite{HST}). Moreover, provided that
$\{\varepsilon_n\}$ is decreasing to zero fast enough, the
sets $E_n=\{P_n=0\}$ converge to a nonempty unbounded connected closed set
$\mathcal{E}\subset\mathbb{C}^2$, where the convergence is understood with respect to the Hausdorff metric
on each compact subset of $\mathbb{C}^2$. More precisely,
\begin{align}\label{eq2}
	\mathcal{E}=\lim_{n\to\infty}E_n=\left\{b\in\mathbb{C}^2_{z,w}: \exists \,\, b_n\in E_n, ~n=1,2, ... , \text{with}~ b=\lim_{n\to\infty}b_n\right\}.
\end{align}

Define

$$\phi_n(z,w):=\frac 1{2^n}\log\left|P_n(z,w)\right|.$$

\noindent
Then $\phi_n$ converges  uniformly on compact subsets of $\mathbb{C}^2\setminus \mathcal{E}$ to a pluriharmonic function $\phi: \mathbb{C}^2\setminus \mathcal{E}\to \mathbb{R}$, and $\lim_{(z,w)\to (z_0,w_0)}\phi(z,w)=-\infty$ for every $(z_0,w_0)\in \mathcal{E}$. In particular, $\phi$ has a unique extension to a plurisubharmonic function on the whole of $\mathbb{C}^2$ (see, for instance, \cite[Chapter I, 5.24]{Demailly-ebook}), and the set $\mathcal{E}=\{\phi=-\infty\}$
is complete pluripolar.

\section{Construction of the domain $\mathfrak{A}$} \label{Construction of the domain}

Consider first a plurisubharmonic function $\varphi$ defined on ${\mathbb C}^2$ by

$$\varphi(z,w) := \phi(z,w) + \rho(|{\rm Re}(z)|) + \rho(|{\rm Im}(z)|),$$

\noindent
where $\rho$ is the convex function constructed in the Section 2.2 of \cite{GSh2019}, and observe, that for each $t > 0$ the domain
\[U_t := \{(z, w) \in {\mathbb C}^2 : \varphi(z,w) < t\}\]
coincides with the domain $F_d$ of Lemma 2 in \cite{GSh2019} for $d = \frac{1}{2}e^t$. {The structure of the domains $F_d$, which were systematically studied in \cite{GSh2019}, will be essential for proving in Section \ref{Kobayashi completeness} below the Kobayashi completeness of our domain $\mathfrak{A}$. The crucial technical tool for this proof is the following property established in \cite{GSh2019}:

\vspace{2mm}
{\bf Property $(\mathcal F)$:} {\sl For each $d > 0$ there exists $r_0 := r_0(d) > 0$ such that the domain $F_d$ contains no holomorphic disks of radius $r > r_0$ (the last part of the statement means, more precisely, that for every holomorphic map $h : \Delta_r(0) \to F_d$ such that $\|{h'(0)}\| = 1$ one has $r \leq r_0$).}

\vspace{2mm}
Let us} now pick one of these domains, for example $U_{-1}$, and denote it (to simplify our notations) by $U$, then $U$ will be a neighborhood of the Wermer type set $\mathcal{E}$ in $\mathbb{C}^2$ which is defined by

$$U := \{(z, w) \in \mathbb{C}^2 : \varphi (z, w) < - 1 \}.$$

\noindent
Then we set
\begin{align}\label{the defining function}
\tilde{\varphi}:=-\log(-\varphi)+\tilde{\rho}(\|\zeta\|^2) ~~\text{on}~~~ \overline{U},
\end{align}
where  $\tilde{\rho}: [0, \infty) \to [0, \infty)$ is a function with the following properties:

\vspace{2mm}
(i) $\,\,\,\,\,$  $\tilde{\rho}$ is smooth strictly increasing and convex,

\vspace{2mm}
(ii) $\,\,\,$  $\tilde{\rho}(t) = t$, when $t \in [0, t_0]$, for some $t_0 > 0$,

\vspace{2mm}
(iii) $\,\,$ $\lim_{t\to \infty}{\tilde{\rho}}'(t) = \infty$.

\vspace{2mm}
\noindent
Here $\zeta = (z, w) \in \mathbb{C}^2$ and $\|\zeta\|^2:=|z|^2+|w|^2$ is the Euclidean norm. Observe that, in view of strict monotonicity and convexity of the functions $t \to -\log(-t)$ and $\tilde{\rho}$, the function $\tilde{\varphi}$ is strictly plurisubharmonic on $U$. Then, since $\tilde{\varphi}>0$ on $\partial U$, and since $\tilde{\varphi} = -\infty$ on the set $\mathcal{E}$, we conclude that, maybe after a small perturbation of the function $\tilde{\rho}$,  the domain
\begin{align}\label{the constructed domain}
\mathfrak{A} := \{x\in U: ~\tilde{\varphi}< -1\}.
\end{align}
 will be a smoothly bounded strongly pseudoconvex neighborhood of $\mathcal{E}$ such that $\overline{\mathfrak{A}} \subset U$.

Note that, by Theorem \ref{Liouville type theorem} below, we know that any bounded from above continuous plurisubharmonic function $u$ in the neighborhood of $\mathcal{E}$ is constant on $\mathcal{E}$. Therefore, the core $\mathfrak{c}(\mathfrak{A})$ of $\mathfrak{A}$ contains $\mathcal{E}$. Observe also that, for each constant $c>u\big|_{\mathcal{E}}$, the sublevel set $\{\zeta \in \mathfrak{A}: u(\zeta) \leqslant c\}$ is not relatively compact in $\mathfrak{A}$. This shows that the domain $\mathfrak{A}$ is not hyperconvex.

Now we will study the existence of the Bergman metric as well as the Bergman completeness of $\mathfrak{A}$. Note first that, since $\varphi<-1$ on $U$, the function $-\log(-\varphi)$ is well defined and, moreover, it is plurisubharmonic, since $\varphi$ is plurisubharmonic. A straightforward calculation yields

\begin{align}\label{str plurisubharmonicity}
i\partial\bar{\partial}\tilde{\varphi}=&\frac{i\partial\bar{\partial}\varphi}{-\varphi}+\frac{i\partial\varphi\wedge\bar{\partial}\varphi}{\varphi^2}+ {\tilde{\rho}}' \cdot i\partial\bar{\partial}\|\zeta\|^2+{\tilde{\rho}}''\cdot i\partial\|\zeta\|^2\wedge\bar{\partial}\|\zeta\|^2\\
\vspace{3mm}
\geqslant &{\tilde{\rho}}'(\|\zeta\|^2) \cdot i\partial\bar{\partial}\|\zeta\|^2.\nonumber
\end{align}

\noindent
Thus one can make $i\partial\bar{\partial}\tilde{\varphi}$ arbitrary large whenever $\|\zeta\|^2$ is large and $\tilde{\rho}$ grows sufficiently fast. Moreover, we can also force the volume of $\mathfrak{A}$ to be finite if $\tilde{\rho}$ goes to $+\infty$ fast enough. This insures that the holomorphic polynomials are $L^2$-integrable. In particular, the Bergman kernel of $\mathfrak{A}$ is non-degenerated.


\section{Existence of the Bergman metric} \label{Existence of the Bergman metric}

A notion of the core $\mathfrak c(\Omega)$ of a domain $\Omega\subset \mathbb{C}^n$ (or, more general, of a domain in a complex manifold $\mathcal{M}$) was introduced and intensively studied in \cite{HST-MZ}, \cite{HST-JGA} and \cite{PSh2019}. It can be defined as follows.

\begin{definition}
Let $\mathcal{M}$ be a complex manifold and let $\Omega \subset \mathcal{M}$ be a domain. Then the set

\begin{align*}
\mathfrak{c}(\Omega) := \big\{&\zeta \in \Omega : \text{every smooth plurisubharmonic function on $\Omega$ that is} \\& \text{bounded from above fails to be strictly plurisubharmonic in } \zeta \big\}
\end{align*}
is called the core of $\Omega$.
\end{definition}

\vspace{0.3cm}
Similar definition can also be given for plurisubharmonic functions from different smoothness classes.

Later on in \cite{GHH} a slightly larger class of functions defined on  a given domain $\Omega\subset \mathbb{C}^n$, was considered:
\[\mathrm{PSH}'(\Omega):=\{\phi\in \rm{PSH}(\Omega): ~\phi\not\equiv -\infty ~\text{and} ~ \nu(\phi, \cdot)\equiv 0\},\]
where $\rm{PSH}(\Omega)$ denotes the family of plurisubharmonic functions in $\Omega$ and $\nu(\phi, \zeta_0)$ denotes the Lelong number of $\phi$ at $\zeta_0$, i.e.
\[\nu(\phi, \zeta_0):=\liminf_{\zeta \to \zeta_0}\frac{\phi(\zeta)}{\log|\zeta - \zeta_0|}.\]

In order to formulate a general sufficient condition for the infinite dimensionality of the Bergman space of $\Omega$, they introduced the following notion of the core $\mathfrak c'(\Omega)$ of $\Omega$ (which is a slight modification of the notion of the core $\mathfrak c(\Omega)$ given above).

\begin{definition}
Let $\Omega$ be a domain in $\mathbb{C}^n$. Then the core $\mathfrak c'(\Omega)$ of $\Omega$ is defined by
\begin{align*}
\mathfrak c'(\Omega):=\{&\zeta\in \Omega:~ \text{every}~ \phi \in \mathrm{PSH}'(\Omega)~ \text{that is bounded from }\\
&\text{above fails to be strictly plurisubharmonic at}~ \zeta \}.
\end{align*}
\end{definition}

The following criterion for the existence of the Bergman metric on $\Omega$, in terms of the core $\mathfrak c'(\Omega)$, has been formulated in \cite[Remark 7 (b)]{GHH}.

\begin{theorem}\label{Existence of the Bergman metric}
Every pseudoconvex domain $\Omega\subset\mathbb{C}^n$ with the empty core $\mathfrak c'(\Omega)$ possesses a Bergman metric.
\end{theorem}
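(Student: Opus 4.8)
The plan is to derive the existence of the Bergman metric on a pseudoconvex $\Omega$ with $\mathfrak c'(\Omega)=\emptyset$ from the standard $L^2$-existence machinery for the $\bar\partial$-operator, using the hypothesis to manufacture, near any prescribed point, a globally bounded-above plurisubharmonic weight with a strictly positive Levi form and no Lelong mass. Concretely, fix $\zeta_0\in\Omega$ and a tangent direction $v\in\mathbb C^n$; to produce the Bergman metric one must exhibit $L^2$-holomorphic functions $f,g$ on $\Omega$ that separate the $1$-jet at $\zeta_0$ along $v$ (equivalently, that the evaluation $f\mapsto(f(\zeta_0),(\partial_v f)(\zeta_0))$ is surjective on the Bergman space), so the Bergman pseudometric is nondegenerate at $\zeta_0$ in the direction $v$. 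First I would use the assumption $\zeta_0\notin\mathfrak c'(\Omega)$: there is $\phi\in\mathrm{PSH}'(\Omega)$, bounded from above, that is strictly plurisubharmonic at $\zeta_0$, hence on a neighborhood $V$ of $\zeta_0$. Normalizing, $\phi\le 0$ on $\Omega$ and $i\partial\bar\partial\phi\ge c\,i\partial\bar\partial\|\zeta\|^2$ on $V$ for some $c>0$.

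Next I would set up the weight. Let $\chi$ be a cutoff supported in $V$, equal to $1$ near $\zeta_0$, and form the section $\alpha=\bar\partial(\chi\cdot\ell)$, where $\ell$ is the affine polynomial realizing the desired $1$-jet at $\zeta_0$; $\alpha$ is a smooth $\bar\partial$-closed $(0,1)$-form supported in $V$. To solve $\bar\partial u=\alpha$ with good pointwise control of the $1$-jet of $u$ at $\zeta_0$, apply Hörmander–Demailly's $L^2$-estimate on the pseudoconvex $\Omega$ with weight $\psi=\phi+2n\log|\zeta-\zeta_0|$ (the logarithmic term forces $u$ and its first derivative to vanish at $\zeta_0$, since $e^{-\psi}$ is not locally integrable there to the required order). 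Pseudoconvexity of $\Omega$ gives an exhaustion to run the estimate; the strict positivity of $i\partial\bar\partial\phi$ on $V\supset\mathrm{supp}\,\alpha$ furnishes the needed curvature lower bound exactly where $\alpha$ lives, so $\|u\|^2_{L^2(e^{-\psi})}\le C\int_V |\alpha|^2 e^{-\psi}<\infty$. Then $f=\chi\ell-u$ is holomorphic on $\Omega$, lies in $L^2(\Omega)$ because $\phi$ is bounded above (so $e^{-\phi}$ is bounded below and the weight only helps integrability away from $\zeta_0$, while near $\zeta_0$ the singular weight is innocuous for the holomorphic $f$), and has the prescribed $1$-jet at $\zeta_0$. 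Varying $\ell$ over the $(n{+}1)$-dimensional space of affine jets yields enough Bergman functions to conclude nondegeneracy at every $\zeta_0$ and in every direction, which is the definition of possessing a Bergman metric.

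The one genuine subtlety — and where the condition $\nu(\phi,\cdot)\equiv 0$ in the definition of $\mathrm{PSH}'(\Omega)$ is used — is the borderline between this argument and outright local integrability of $e^{-\phi}$: I must ensure that adding the singular term $2n\log|\zeta-\zeta_0|$ does not create Lelong mass of $\psi$ at other points that would spoil either the solvability or the membership $f\in L^2$. Since $\phi$ has identically vanishing Lelong numbers, $e^{-\phi}$ is locally integrable on all of $\Omega$ (by Skoda's theorem on integrability of $e^{-\phi}$ when $\nu(\phi,\cdot)<2$, here $=0$), and the extra pole is localized at $\zeta_0$ only; hence $f\in L^2_{\rm loc}$ everywhere, and together with the bound on $\|u\|_{L^2(e^{-\psi})}$ and boundedness above of $\phi$ one gets $f\in L^2(\Omega,d\lambda)$ globally. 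The main obstacle I anticipate is precisely this bookkeeping: arranging one weight that is simultaneously (a) globally $\le 0$ so that $L^2(e^{-\psi})\subset L^2$ up to the local pole, (b) strictly plurisubharmonic on the support of $\alpha$ to close the Hörmander estimate, and (c) singular enough at $\zeta_0$ to kill the $1$-jet of $u$ there — all compatible because $\phi\in\mathrm{PSH}'(\Omega)$ is bounded above, strictly psh at $\zeta_0$, and Lelong-number-free. Everything else is the routine Hörmander $\bar\partial$-argument for nondegeneracy of the Bergman kernel.
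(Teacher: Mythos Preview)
The paper does not supply its own proof of this statement: Theorem~\ref{Existence of the Bergman metric} is quoted from \cite[Remark~7(b)]{GHH} and used as a black box. So there is no argument in the paper to compare against, and I will simply assess your sketch on its merits.

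Your H\"ormander--Demailly strategy is the natural one and is essentially what underlies the cited result, but two technical points need repair.

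First (minor), the coefficient $2n$ in front of $\log|\zeta-\zeta_0|$ only forces $u(\zeta_0)=0$, not $\partial u(\zeta_0)=0$: a linear term $u\sim a\cdot(\zeta-\zeta_0)$ gives $|u|^2|\zeta-\zeta_0|^{-2n}\sim|\zeta-\zeta_0|^{2-2n}$, which \emph{is} locally integrable in $\mathbb{C}^n$. To kill the full $1$-jet you need $2(n+1)\log|\zeta-\zeta_0|$ (compare the weight $(2n+2)g$ used in Section~\ref{Completeness of the Bergman metric}, Case~1, for exactly this purpose).

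Second, and this is a genuine gap, the passage from $u\in L^2(\Omega,e^{-\psi})$ to $f\in L^2(\Omega)$ fails as written. With $\psi=\phi+2n\log|\zeta-\zeta_0|$ one has $e^{-\psi}=e^{-\phi}|\zeta-\zeta_0|^{-2n}$; the factor $e^{-\phi}\ge 1$ helps, but $|\zeta-\zeta_0|^{-2n}\to 0$ at infinity, so outside a bounded set the weighted space is \emph{larger} than $L^2(\Omega)$ and the inclusion runs the wrong way---contrary to your assertion that ``the weight only helps integrability away from $\zeta_0$''. The standard fix is to truncate the logarithmic pole: take $\psi=N\phi+2(n+1)\,\chi_0(\zeta)\log|\zeta-\zeta_0|$ with $\chi_0$ a cutoff supported in $V$ and equal to $1$ near $\zeta_0$, choosing $N$ so large that the strict plurisubharmonicity $i\partial\bar\partial\phi\ge c\,i\partial\bar\partial\|\zeta\|^2$ on $V$ absorbs the negativity produced by $\bar\partial\chi_0$. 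Outside $\mathrm{supp}\,\chi_0$ one then has $\psi=N\phi\le 0$, hence $e^{-\psi}\ge 1$, and the unweighted $L^2$ bound follows. This is also where the hypothesis $\nu(\phi,\cdot)\equiv 0$ actually earns its keep: by Skoda it guarantees $e^{-N\phi}\in L^1_{\mathrm{loc}}$ for \emph{every} $N>0$, so the right-hand side of the H\"ormander estimate stays finite after you have scaled $\phi$ up by the large constant $N$.
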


\vspace{0.3cm}
If we now consider the domain $\mathfrak{A}$ defined in (\ref{the constructed domain}) and observe that the function $\tilde{\varphi}$ defined by (\ref{the defining function}) is negative strictly plurisubharmonic on this domain and has zero Lelong numbers (this easily follows from the fact that  $h(t):=-(1/t)\log(-t)$ is an increasing convex function in $t$ and the fact that $\lim_{t\to -\infty}h(t)=0$, see for details \cite[Remark 7 (c)]{GHH}), then, applying Theorem \ref{Existence of the Bergman metric} to the domain $\mathfrak{A}$ on the place of $\Omega$, we conclude that the Bergman metric on $\mathfrak{A}$ exists.

\vspace*{0.3cm}
\noindent
{\bf Remark 1.} Since the restriction of $\, i\partial\bar{\partial}{\varphi} \,$ to each vertical line ${\mathbb C}_{z_0, w} := \{(z, w) \in {\mathbb C}^2 : z = z_0 \}$ is spread over the Cantor set $\mathcal{E}_{z_0} := \mathcal{E} \cap {\mathbb C}_{z_0, w}$, one can prove that the Lelong numbers of the strictly plurisubharmonic function $\varphi+\tilde{\rho}(\|\zeta\|^2) + C\|\zeta\|^2$, $C > 0$, will also be identically equal to zero. Hence, we can use this function instead of the function $\tilde{\varphi}$ in the definition of the domain $\mathfrak{A}$ and in all the arguments later on. But the proof of the fact that the Lelong numbers of $\tilde{\varphi}$ are equal to zero is more elementary, that is why we have decided to use it here.

\section{Completeness of the Bergman metric} \label{Completeness of the Bergman metric}

The main argument which we will use to prove Bergman completeness of the domain $\mathfrak{A}$ (see the Case 2 below) follows closely the arguments presented in the proof of Theorem 1.2 in \cite{Chen2013}. For the reader's convenience we give them here in details.

First we recall the notion of the pluricomplex Green function on an open set $\Omega\subset\mathbb{C}^n$ with logarithmic pole at $\zeta_0 \in \Omega$:
\[g_{\Omega}(\zeta,\zeta_0):=\sup\{u(\zeta)\}, \quad \zeta \in \Omega,\]
where the supremum is taken over all negative plurisubharmonic functions $u$ on $\Omega$ such that $u(\zeta) \leqslant \log\|\zeta-\zeta_0\|+O(1)$ near $\zeta_0$.

For each $a>0$, set \[A_{\Omega}(\zeta_0,a):=\{\zeta \in \Omega: ~~ g_{\Omega}(\zeta,\zeta_0) \leqslant -a \}.\] The following criterion for the Bergman completeness is given in \cite[Theorem 1.1]{Chen2013}.

\begin{theorem}\label{Bergman completeness}
If a Stein manifold $\Omega$ possesses the Bergman metric, then it
is Bergman complete provided the following condition is satisfied:

\vspace{0.3cm}
For any infinite sequence of points $\{\zeta_k\}$ in $\Omega$, without accumulation points
in $\Omega$, there are a subsequence $\{\zeta_{k_j}\}$, a number $a>0$ and a continuous volume
form $dV$ on $\Omega$, such that for any compact subset $K \subset \Omega$, one has
\[\int_{K\cap A_\Omega(\zeta_{k_j},a)}dV\to 0 \quad \text{as}~ j\rightarrow \infty.\]

\end{theorem}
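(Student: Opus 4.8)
The plan is to combine three ingredients: a Herbort-type lower bound for the Bergman kernel in terms of sublevel sets of the pluricomplex Green function, the completeness criterion going back to Kobayashi \cite{Kobayashi}, and an elementary splitting of the relevant integral into a compact part and an $L^2$-tail. Write $n=\dim_{\mathbb{C}}\Omega$, let $H^2(\Omega)$ be the Hilbert space of square-integrable holomorphic $(n,0)$-forms on $\Omega$ (whose norm defines $K_\Omega$ and the Bergman metric $\beta_\Omega$), and for $f\in H^2(\Omega)$ put $|f|^2:=i^{n^2}f\wedge\bar f$, a continuous volume form on $\Omega$ with $\int_\Omega|f|^2<\infty$, so that $|f(\zeta_0)|^2/K_\Omega(\zeta_0)$ makes sense as a quotient of densities at $\zeta_0$. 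First I would record, for the Stein manifold $\Omega$, the estimate of Herbort (cf. \cite{Her}, \cite{Chen2013}): for each $a>0$ there is a constant $C(n,a)<\infty$ such that
\[
\frac{|f(\zeta_0)|^2}{K_\Omega(\zeta_0)}\ \le\ C(n,a)\int_{A_\Omega(\zeta_0,a)}|f|^2
\qquad\text{for all }\ \zeta_0\in\Omega,\ f\in H^2(\Omega).
\]
It is obtained by solving $\bar\partial u=f\,\bar\partial\chi$, with $\chi$ a cut-off equal to $1$ near $\zeta_0$ and supported in $A_\Omega(\zeta_0,a)$, by means of the Donnelly--Fefferman / Ohsawa--Takegoshi estimate against a truncation of the weight $2n\,g_\Omega(\cdot,\zeta_0)$; then $\chi f-u$ is holomorphic, agrees with $f$ at $\zeta_0$, has $L^2$-norm controlled by $\|f\|_{L^2(A_\Omega(\zeta_0,a))}$, and the extremal property of $K_\Omega$ yields the bound. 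Pseudoconvexity (Steinness) is precisely what makes this $\bar\partial$-problem solvable with the required estimate, and for the argument I need only that $C(n,a)$ is finite for the fixed value of $a$ supplied by the hypothesis.

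Next I would use the hypothesis to show that, for the subsequence $\{\zeta_{k_j}\}$, the number $a>0$ and the continuous volume form $dV$ that it provides, one has $\int_{A_\Omega(\zeta_{k_j},a)}|f|^2\to0$ as $j\to\infty$ for every $f\in H^2(\Omega)$. Indeed, fix $f$ and $\varepsilon>0$; since $\int_\Omega|f|^2<\infty$, there is a compact $K\subset\Omega$ with $\int_{\Omega\setminus K}|f|^2<\varepsilon$, and on $K$ the continuous density $|f|^2/dV$ is bounded by some $M=M(f,K)$, whence
\[
\int_{A_\Omega(\zeta_{k_j},a)}|f|^2\ \le\ \int_{K\cap A_\Omega(\zeta_{k_j},a)}|f|^2+\int_{\Omega\setminus K}|f|^2\ \le\ M\int_{K\cap A_\Omega(\zeta_{k_j},a)}dV+\varepsilon .
\]
The first term tends to $0$ by hypothesis, so $\limsup_j\int_{A_\Omega(\zeta_{k_j},a)}|f|^2\le\varepsilon$; as $\varepsilon$ was arbitrary, the claim follows. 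Together with Herbort's inequality, this yields $|f(\zeta_{k_j})|^2/K_\Omega(\zeta_{k_j})\to0$ for every $f\in H^2(\Omega)$.

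Finally I would invoke Kobayashi's criterion. Assume, for contradiction, that $\{\zeta_k\}$ is a Cauchy sequence for $\beta_\Omega$ with no accumulation point in $\Omega$. Fix an orthonormal basis $\{e_\ell\}$ of $H^2(\Omega)$, set $v(z):=K_\Omega(z)^{-1/2}\bigl(e_0(z),e_1(z),\dots\bigr)$, a unit vector of $\ell^2$, and $\iota(z):=[\,v(z)\,]\in\mathbb{P}(\ell^2)$; this is well defined because $K_\Omega>0$ (as $\Omega$ carries the Bergman metric), and the Bergman metric coincides, up to a universal positive constant, with the pull-back $\iota^{*}\omega_{FS}$ of the Fubini--Study metric, so $\beta_\Omega$ dominates a constant multiple of $d_{FS}\!\circ\!\iota$. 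Hence $\{\iota(\zeta_k)\}$ is a Cauchy sequence in $\mathbb{P}(\ell^2)$, which is complete for $d_{FS}$, and therefore $\iota(\zeta_k)\to[\xi]$ for some unit $\xi=(\xi_\ell)\in\ell^2$. Passing to the subsequence $\{\zeta_{k_j}\}$ above (which is still Cauchy, still without accumulation in $\Omega$, and still satisfies $\iota(\zeta_{k_j})\to[\xi]$) and taking $f:=\sum_\ell\bar\xi_\ell e_\ell\in H^2(\Omega)$, one computes $|f(\zeta_{k_j})|^2/K_\Omega(\zeta_{k_j})=\bigl|\langle v(\zeta_{k_j}),\xi\rangle\bigr|^2\to1$, contradicting the previous paragraph. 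Hence every $\beta_\Omega$-Cauchy sequence has an accumulation point in $\Omega$, and therefore converges there (since $\beta_\Omega$ induces the manifold topology); that is, $\Omega$ is Bergman complete.

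The conceptual content lies in the first and third steps, both classical; the point that is genuinely new compared with the bounded hyperconvex case is the splitting in the middle step, which is forced on us because in the unbounded (or manifold) setting the Green sublevel sets $A_\Omega(\zeta_{k_j},a)$ are in general not relatively compact, so one cannot conclude $\int_{A_\Omega(\zeta_{k_j},a)}|f|^2\to0$ merely from these sets leaving every compact subset of $\Omega$. I expect the hardest technical point to be making Herbort's inequality genuinely available on a Stein manifold with a constant depending on nothing but $a$ and $\dim\Omega$: this requires running the Ohsawa--Takegoshi / Donnelly--Fefferman machinery with respect to an auxiliary complete K\"ahler metric on $\Omega$ and checking that the resulting bound is independent of that choice.
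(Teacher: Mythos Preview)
Your outline is correct and is essentially Chen's own proof of \cite[Theorem~1.1]{Chen2013}: Herbort's inequality via Donnelly--Fefferman, the compact-plus-tail splitting of $\int_{A_\Omega(\zeta_{k_j},a)}|f|^2$, and Kobayashi's criterion through the embedding $\iota:\Omega\to\mathbb{P}(\ell^2)$. Note, however, that the paper does \emph{not} prove this theorem at all---it quotes it verbatim from \cite{Chen2013} and then \emph{applies} it to the specific domain $\mathfrak{A}$; the Case~1 and Case~2 arguments that follow the statement in the paper are devoted to verifying the volume-decay hypothesis for $\mathfrak{A}$, not to proving the criterion itself. So there is no ``paper's own proof'' to compare against; what you have written is a faithful reconstruction of the cited source, and the technical concern you flag at the end (uniformity of the constant $C(n,a)$ on a Stein manifold, independent of the auxiliary complete K\"ahler metric) is exactly the point that Chen handles in \cite{Chen2013}.
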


\vspace{0.1cm}
Using this criterion we will now prove that Bergman metric on $\mathfrak{A}$ is complete.

\vspace{2mm}
Indeed, let $\{\zeta_k\}$ be an infinite sequence of points in $\mathfrak{A}$ without accumulation points in $\mathfrak{A}$.
Then we have two possibilities:

\vspace{0.3cm}
\noindent
{\bf Case 1.} {\it $\{\zeta_k\}$ admits an accumulation point on $\partial \mathfrak{A}$.}

\vspace{0.2cm}
In this case, in view of strict pseudoconvexity of $\mathfrak{A}$, a standard localization argument shows that the Bergman metric is complete. This is definitely known for experts, but for completeness of the presentation we include the sketch of the proof here.

Let $p\in\partial\mathfrak{A}$ be an accumulation point of $\{\zeta_k\}$ and let $U$ be a neighborhood of $p$. Since $\mathfrak{A}$ is strictly pseudoconvex and, hence, locally has a strictly plurisubharmonic defining function, there is a neighborhood $V\Subset U$ of the point $p$ and a constant $a>0$, such that for every point $\zeta_0 \in V$, one has $A_{\mathfrak{A}}(\zeta_0, a)\subset U\cap\mathfrak{A}$. We fix an arbitrary point $\zeta_0 \in V$ and consider a function $f\in \mathcal{O}(U\cap\mathfrak{A})$ which has the properties $f(\zeta_0)=0$ and $\|f\|^2_{L^2(U\cap\mathfrak{A})}=1$. Then we set
\begin{align}\label{eqq0}
\alpha:=\bar{\partial}(\chi\circ g_{\mathfrak{A}})\cdot f,
\end{align}
where $g_{\mathfrak{A}}$ is the pluricomplex Green function of $\mathfrak{A}$ with a pole at $\zeta_0$ (written in what follows as $g$ for simplicity) and $\chi : (-\infty, +\infty) \to [0, 1]$ is a smooth cut-off function such that $\chi(t)\equiv 1$ for $t\leqslant -2a$ and $\chi(t)\equiv 0$ for $t\geqslant -a$. Observe that $\alpha$ is a closed $(0,1)$-form which can be extended by 0 to the whole of $\mathfrak{A}$.

Next we consider the functions
\begin{align}\label{eqq1}
\psi:=-\log(-g)
\end{align}
and
\begin{align}\label{eqq2}
\varphi:=(2n+2)g.
\end{align}
Since $g$ is a negative plurisubharmonic function on $\mathfrak{A}$,
a direct calculation (see also the estimate (\ref{str plurisubharmonicity}) above with $\tilde{\rho} \equiv 0$) easily shows that
\begin{align}\label{eqq3}
i\partial g\wedge\bar{\partial}g \leq g^2 i\partial\bar{\partial}\psi.
\end{align}
Since, by (\ref{eqq0}), $\alpha = \bar{\partial}(\chi\circ g)\cdot f = \chi'\circ g \cdot \bar{\partial}g \cdot f$, it follows from (\ref{eqq3}) that
\begin{align}\label{eqq4}
|\alpha|^2=|\chi'\circ g|^2 \cdot i\partial g\wedge\bar{\partial}g \cdot |f|^2 \leq |\chi'\circ g|^2 \cdot g^2 \cdot |f|^2 \cdot i\partial\bar{\partial}\psi = H \cdot i\partial\bar{\partial}\psi,
\end{align}
where $H := |\chi'\circ g|^2 \cdot g^2 \cdot |f|^2$. Then, by the celebrated Donnelly-Fefferman's existence theorem (see for example \cite[Theorem 3.1]{Blocki2005}), and in view of (\ref{eqq1}) and (\ref{eqq4}), there exists $u\in L^2_{loc}(\mathfrak{A})$ such that $\bar{\partial}u=\alpha$ and the following estimate holds
\begin{align}\label{Donnelly-Fefferman estimate}
\int_{\mathfrak{A}}|u|^2e^{-\varphi}\leqslant 16 \cdot \int_{\mathfrak{A}}H \cdot e^{-\varphi}.
\end{align}

\noindent
From the definition of $H$ and the fact that $\|f\|^2_{L^2(U\cap\mathfrak{A})}=1$ we can conclude by (\ref{Donnelly-Fefferman estimate}) that
\[
\int_{\mathfrak{A}}|u|^2e^{-\varphi} \leqslant 16 \int_{\mathfrak{A}}|\chi'\circ g|^2 g^2|f|^2 e^{-\varphi} \leqslant 16 \cdot C(a) \cdot 4 a^2 \int_{A_{\mathfrak{A}}(\zeta_0, a)\setminus A_{\mathfrak{A}}(\zeta_0, 2a)}|f|^2 \leqslant \tilde C(a),
\]
where $C(a)$ and $\tilde C(a)$ are some constants that depend on $a$ only (for ${\mathfrak{A}}$, $U$ and $V$ being fixed). It follows now from the fact that $g(\zeta, \zeta_0)-\log|\zeta-\zeta_0|$ is locally bounded and, hence, in view of (\ref{eqq2}), from the fact that the integrability of $|u|^2e^{-\varphi} = |u|^2e^{-(2n+2)g}$ is the same as the integrability of $|u|^2|\zeta-\zeta_0|^{-(2n+2)}$, that $u(\zeta_0)=0$ and $\frac{\partial u}{\partial \zeta}(\zeta_0)=0$. Therefore, if we set
\begin{align}\label{eqq5}
F:=(\chi\circ g)f-u,
\end{align}
we will get a function $F\in \mathcal{O}(\mathfrak{A})$ such that
\begin{align}\label{eqq5a}
F(\zeta_0)=f(\zeta_0) ~~~\text{and}~~~~~~~ \frac{\partial F}{\partial \zeta}(\zeta_0)=\frac{\partial f}{\partial \zeta}(\zeta_0).
\end{align}
By the estimate (\ref{Donnelly-Fefferman estimate}), we have that
\[\|F\|_{L^2(\mathfrak{A})}\leqslant \|(\chi\circ g)f\|^2_{L^2(\mathfrak{A})}+\|u\|^2_{L^2(\mathfrak{A})}\leqslant (1+\tilde C(a))\|f\|^2_{L^2(U\cap\mathfrak{A})}=:\tilde{\tilde{C}}. \]
Recall that the Bergman metric is defined by
\[ds^2_{\mathfrak{A}}(\zeta, X):=K^{-1}_{\mathfrak{A}}(\zeta)\sup \{|Xf|^2: f\in \mathcal{O}(\mathfrak{A}) ~ \text{with}~  f(\zeta)=0~ \text{and}~ \|f\|_{L^2(\mathfrak{A})}\leqslant1\},\] where $X$ is a nonzero tangent vector at $\zeta$. If we assume that the function $f$ achieves the above supremum on $U\cap\mathfrak{A}$, we will get by (\ref{eqq5}) a function $F\in \mathcal{O}(\mathfrak{A})$ such that, in view of  (\ref{eqq5a}), for any $\zeta\in V\Subset U$ the following estimate holds
\[K_{\mathfrak{A}}(\zeta)\cdot ds^2_{\mathfrak{A}}(\zeta, X)\geqslant \tilde{\tilde{C}}^{-1}|XF|^2=\tilde{\tilde{C}}^{-1}|Xf|^2=\tilde{\tilde{C}}^{-1}K_{U\cap\mathfrak{A}}(\zeta)\cdot ds^2_{U\cap\mathfrak{A}}(\zeta, X).\]
We can conclude now from the trivial inequality $K_{\mathfrak{A}}(\zeta)\leqslant K_{U\cap\mathfrak{A}}(\zeta)$ that
$$ds^2_{\mathfrak{A}}(\zeta, X)\geqslant \tilde{\tilde{C}}^{-1} ds^2_{U\cap\mathfrak{A}}(\zeta, X),$$
for all $\zeta\in V$. This completes the proof of the localization property for the Bergman metric and shows the completeness of the Bergman metric at the finite points of $\partial \mathfrak{A}$.

\vspace{0.3cm}
\noindent
{\bf Case 2.} {\it $\zeta_k\to \infty$ as $k\to \infty$.}

\vspace{0.2cm}
Consider a smooth cut-off function $\chi$ on $\mathbb{C}^2$ such that $\chi(\zeta) = 1$, when $\|\zeta\| \leqslant \frac{1}{2}$ and $\chi(\zeta) = 0$, when $\|\zeta\| \geq 1,$ and then for every $\delta > 0$ define the auxiliary function \[u_{\delta,k}(\zeta) := \delta\tilde\varphi(\zeta) + \chi(\zeta-\zeta_k)\log\|\zeta-\zeta_k\|.\]
 Notice that there exists a constant $C_1 > 0$ such that
 \[i\partial\bar\partial\left(\chi(\zeta)\log\|\zeta\|^2\right) \geqslant -{C_1}i\partial\bar\partial\|\zeta\|^2.\]
 Let $K$ be any compact subset in $\mathfrak{A}$. Observe that for each $\delta>0$ there exists a positive integer $k_{\delta}$ such that for every $k>k_{\delta}$ one has that $B(\zeta_k) \cap K=\emptyset$ (here $B(\zeta_k)$ denote the ball of radius $1$ with center at $\zeta_k$) and, moreover, that the function $u_{\delta,k}$ is a negative and plurisubharmonic on $\mathfrak{A}$.
We can insure plurisubharmonicity of $u_{\delta,k}(\zeta)$ for large enough $k_\delta$ in the following way:
 \begin{itemize}
   \vspace{0.2cm}	
   \item On the set $\mathfrak{A} \setminus \overline{B(\zeta_k)}$ plurisubharmonicity of $u_{\delta,k}(\zeta)$ is clear, since on this set $\chi(\zeta - \zeta_k) \equiv 0$.
   \vspace{0.2cm}
   \item On the set $B(\zeta_k)$ one has $$i\partial\bar\partial u_{\delta,k} \geqslant \delta\cdot i\partial\bar{\partial}\tilde{\varphi}- {C_1}i\partial\bar\partial\|\zeta\|^2 \geqslant \left(\delta \cdot  {\tilde{\rho}}'(\|\zeta\|^2)-{C_1}\right)i\partial\bar\partial\|\zeta\|^2.$$
   This, in view of our assumptions that $\lim_{t\to \infty}{\tilde{\rho}}'(t) = \infty$ and that $\zeta_k\to \infty$ as $k\to \infty$, implies plurisubharmonicity of $u_{\delta,k}(\zeta)$ on $B(\zeta_k)$ for all large enough  $k_\delta$.
 \end{itemize}

\vspace{0.2cm}
\noindent
Observe now that, since $u_{\delta,k}$ is a negative plurisubharmonic function with (at least) a logarithmic pole at $\zeta_k$, it is included into the family of functions which defines the pluricomplex Green function $g_{\mathfrak{A}}(\zeta,\zeta_k)$. By the definition of the pluricomplex Green function, for large enough $k$ we have
 \[g_{\mathfrak{A}}(\zeta,\zeta_k) \geqslant u_{\delta,k}(\zeta)=\delta\cdot\tilde\varphi \quad \text{for} ~ \zeta\in K\]
 and, hence, also
 \[K\cap A_{\mathfrak{A}}(\zeta_k, a) \subset \{\zeta \in K: \tilde\varphi \leq -a/\delta\}.\]

\noindent
Since for every $\varepsilon>0$ there exists $\delta > 0$ so small that the volume of the set $\{\zeta\in K: \tilde\varphi<-a/\delta\}$ is less than $\varepsilon$, we conclude by the above arguments, that there exists $k_\delta$ such that
\[\int_{K \cap A_{\mathfrak{A}}(\zeta_k, a)}dV<\varepsilon\] for all $k>k_\delta$.
By Theorem \ref{Bergman completeness}, the Bergman metric is complete.

\section{Kobayashi completeness} \label{Kobayashi completeness}

 Let $\Omega$ be an arbitrary domain in $\mathbb{C}^n$. Recall that the Kobayashi pseudometric $\kappa_{\Omega}$ at a point $(\zeta, v)\in \Omega \times \mathbb{C}^n$ is defined as
\begin{align}\label{def: Kobayashi metric}
\kappa_{\Omega}(\zeta, v) := \inf\{\frac 1r: \exists h\in \mathcal{O}(\Delta, \Omega)~\text{with}~ h(0)=\zeta ~\text{and}~ h'(0)=rv\},
\end{align}
where $\Delta:=\{z\in \mathbb{C}: |z|<1\}$ is the unit disc in $\mathbb{C}$ and $\mathcal{O}(\Delta,\Omega)$ denotes the set of all  holomorphic maps from $\Delta$ to $\Omega$.

For any two point $\zeta_1, \zeta_2 \in \Omega$, the Kobayashi pseudodistance is defined by
\begin{align}\label{def: Kobayashi distance}
\kappa_{\Omega}(\zeta_1,\zeta_2)=\inf\left\{\int_0^1\kappa_{\Omega}(\gamma(t),\gamma'(t))dt\right\},
\end{align}
where the infimum is taken over all piecewise $\mathcal{C}^1$ curve $\gamma: [0, 1]\to \Omega$ connecting $\zeta_1$ and $\zeta_2$.

The domain $\Omega$ is called Kobayshi hyperbolic if $\kappa_{\Omega}$ is a distance. $\Omega$ is said to be Kobayashi complete (abbr. $\kappa$-complete) if any $\kappa_{\Omega}$-Cauchy sequence $\{\zeta_j\}_{j\in \mathbb{N}}$ converges to a point $\zeta_0\in \Omega$ with respect to the Euclidean topology of $\Omega$.

When $\Omega$ is a bounded domain in $\mathbb{C}^n$, it is well-known that $\Omega$ is $\kappa$-complete if and only if $\Omega$ is locally $\kappa$-complete, i.e., for any boundary point $a \in \partial \Omega$, there exists a bounded neighborhood $U$ of $a$ such that every connected component of $\Omega \cap U$ is $\kappa$-complete (see, for example, \cite[Theorem 7.5.5]{PJ1993}).

For unbounded domains which are (possibly) not biholomorphic to bounded domains in $\mathbb{C}^n$, Nikolov and Pflug gave a criterion of the $\kappa$-completeness, by introducing the following notions of $\kappa$-points and $\kappa'$-points.

\begin{definition} A point $a\in \partial \Omega$ is called a ${\it \kappa}$-{\it point} for $\Omega$ if $\lim\limits_{\zeta \to a}\kappa_{\Omega}(\zeta,\xi)=\infty$ for any fixed $\xi \in \Omega$.
\end{definition}

\begin{definition} A point $a \in \partial \Omega$  is called a ${\it \kappa}'$-{\it point} for $\Omega$ if there is no $\kappa_\Omega$-Cauchy sequence converging to $a$.
\end{definition}

\vspace{0.3cm}
It is clear that any $\kappa$-point is a $\kappa'$-point. Nikolov and Pflug (see \cite[Proposition 3.6]{NP2005}) have proved the following theorem for the $\kappa$-completeness.

\begin{theorem}
Let $\Omega$ be an open set in $\mathbb{C}^n$. Assume that $\infty$ is a $\kappa'$-point if $\Omega$ is unbounded. Then the following conditions are equivalent:
 \begin{enumerate}
 \item $\Omega$ is $\kappa$-complete.
 \item Any finite boundary point of $\Omega$ admits a neighborhood $U$ such that $\Omega \cap U$ is $\kappa$-complete.
  \item Any finite boundary point of $\Omega$ is a $\kappa'$-point.
  \item Any boundary point of $\Omega$ is a $\kappa$-point.
\end{enumerate}
\end{theorem}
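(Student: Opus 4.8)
The plan is to prove the Nikolov--Pflug criterion by establishing the cyclic chain of implications $(1)\Rightarrow(2)\Rightarrow(3)\Rightarrow(4)\Rightarrow(1)$, using the standing hypothesis that $\infty$ is a $\kappa'$-point when $\Omega$ is unbounded. The implications $(1)\Rightarrow(2)$ and $(2)\Rightarrow(3)$ are essentially formal: if $\Omega$ is $\kappa$-complete, then for any finite boundary point $a$ one can take $U$ to be a small ball and check that $\Omega\cap U$ inherits $\kappa$-completeness from the distance-decreasing property of inclusions and the local structure near $a$ (this is the standard localization argument, cf. \cite[Theorem 7.5.5]{PJ1993}); and $(2)\Rightarrow(3)$ follows since local $\kappa$-completeness of $\Omega\cap U$ at $a$ forces that no $\kappa_\Omega$-Cauchy sequence can converge to $a$ (a $\kappa_\Omega$-Cauchy sequence tending to $a$ would eventually lie in $\Omega\cap U$ and, using $\kappa_{\Omega\cap U}\geq\kappa_\Omega$ together with the completeness of $\Omega\cap U$, would have to converge inside $\Omega\cap U$, contradicting convergence to $a\in\partial\Omega$).

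The heart of the matter is $(3)\Rightarrow(4)$: upgrading the ``no Cauchy sequence converges to $a$'' property ($\kappa'$-point) to the quantitative blow-up ``$\kappa_\Omega(\zeta,\xi)\to\infty$ as $\zeta\to a$'' ($\kappa$-point). First I would fix $\xi\in\Omega$ and suppose for contradiction that there is a sequence $\zeta_j\to a$ with $\kappa_\Omega(\zeta_j,\xi)\leq M<\infty$. The idea is to extract from this a $\kappa_\Omega$-Cauchy sequence converging to $a$, which would contradict $a$ being a $\kappa'$-point. To do this, pass to a subsequence so that $\kappa_\Omega(\zeta_j,\xi)$ converges to some limit $\ell\leq M$; then for $j,k$ large both $\kappa_\Omega(\zeta_j,\xi)$ and $\kappa_\Omega(\zeta_k,\xi)$ are close to $\ell$, but that alone only bounds $\kappa_\Omega(\zeta_j,\zeta_k)\leq\kappa_\Omega(\zeta_j,\xi)+\kappa_\Omega(\xi,\zeta_k)\approx 2\ell$, which is not small. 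The correct device is the standard one: choose $\zeta_0$ to be a point at minimal Kobayashi distance from $\xi$ among accumulation behavior near $a$ — more precisely, among all sequences $\zeta_j\to a$, consider $m:=\inf\{\lim\kappa_\Omega(\zeta_j,\xi)\}$, realize it by a sequence, and then mix two such sequences by a diagonal/interlacing argument so that consecutive terms have Kobayashi distance tending to $0$; the minimality of $m$ is what forces the distances between well-chosen representatives to shrink, because otherwise one could produce a sequence approaching $a$ with strictly smaller limiting distance to $\xi$. This interlaced sequence is then $\kappa_\Omega$-Cauchy and converges to $a$, the desired contradiction. I expect this step to be the main obstacle, precisely because it requires the delicate extremal argument rather than a soft manipulation.

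Finally, for $(4)\Rightarrow(1)$: assume every boundary point (finite or, via the hypothesis, $\infty$) is a $\kappa$-point, and let $\{\zeta_j\}$ be a $\kappa_\Omega$-Cauchy sequence. Since a Cauchy sequence is $\kappa_\Omega$-bounded, fixing $\xi\in\Omega$ we have $\sup_j\kappa_\Omega(\zeta_j,\xi)<\infty$. If $\{\zeta_j\}$ had an accumulation point $a\in\partial\Omega$ (including $a=\infty$ in the unbounded case), then along a subsequence $\zeta_{j_\nu}\to a$ we would get $\kappa_\Omega(\zeta_{j_\nu},\xi)\to\infty$ by the $\kappa$-point property, contradicting boundedness. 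Hence all accumulation points of $\{\zeta_j\}$ lie in $\Omega$; the sequence is therefore relatively compact in $\Omega$, and a $\kappa_\Omega$-Cauchy sequence with an accumulation point in $\Omega$ must converge to that point in the Euclidean topology (since $\kappa_\Omega$ dominates a local Euclidean estimate near any interior point, being a genuine distance there). Thus $\Omega$ is $\kappa$-complete. Assembling the four implications closes the cycle and proves the theorem; I would present $(1)\Rightarrow(2)\Rightarrow(3)$ briefly, spend the bulk of the write-up on $(3)\Rightarrow(4)$, and dispatch $(4)\Rightarrow(1)$ in a short paragraph.
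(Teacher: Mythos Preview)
The paper does not prove this theorem: it is quoted verbatim from Nikolov and Pflug \cite[Proposition~3.6]{NP2005} and then applied to the domain $\mathfrak{A}$. So there is no ``paper's own proof'' to compare against; you are attempting to reconstruct the Nikolov--Pflug argument.

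Your proposed cycle has a genuine gap at $(2)\Rightarrow(3)$. You argue that a $\kappa_\Omega$-Cauchy sequence tending to $a$ eventually lies in $\Omega\cap U$, and then invoke completeness of $\Omega\cap U$ to force convergence inside $\Omega\cap U$. But completeness of $\Omega\cap U$ concerns $\kappa_{\Omega\cap U}$-Cauchy sequences, and the inequality $\kappa_{\Omega\cap U}\ge\kappa_\Omega$ goes the \emph{wrong} way: a $\kappa_\Omega$-Cauchy sequence need not be $\kappa_{\Omega\cap U}$-Cauchy. To repair this you would need a localization estimate of the form $\kappa_{\Omega\cap U}\le C\,\kappa_\Omega$ on a smaller neighborhood of $a$, which is a nontrivial input (of Royden type) and is not supplied. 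Note that the easy implication $(3)\Rightarrow(1)$ (take any $\kappa_\Omega$-Cauchy sequence, pass to a subsequence accumulating in $\partial\Omega\cup\{\infty\}$, contradict the $\kappa'$-property) avoids this difficulty entirely, so a cleaner cycle is $(1)\Rightarrow(4)\Rightarrow(3)\Rightarrow(1)$ together with separate handling of $(2)$.

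Your $(3)\Rightarrow(4)$ sketch is also not a proof as written. The ``extremal'' device you describe --- take $m=\inf\liminf\kappa_\Omega(\zeta_j,\xi)$ over sequences $\zeta_j\to a$, realize it, and interlace --- does not by itself force $\kappa_\Omega(\zeta_j,\zeta_k)\to 0$: minimality of $m$ only controls the distances to $\xi$, and the triangle inequality gives $\kappa_\Omega(\zeta_j,\zeta_k)\ge|\kappa_\Omega(\zeta_j,\xi)-\kappa_\Omega(\zeta_k,\xi)|\to 0$, which is a lower bound, not an upper bound. You correctly flag this as the main obstacle, but the mechanism you outline (``minimality forces distances to shrink, else one could produce a sequence with strictly smaller limiting distance'') is not substantiated: you would need to exhibit near-geodesics from $\xi$ to $\zeta_j$ and show that suitable points on them near $a$ are mutually close in $\kappa_\Omega$, which requires an actual construction. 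As it stands, this step is a plausible heuristic, not an argument.
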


\vspace{0.3cm}
We will now use the above theorem to prove that the domain $\mathfrak{A}$ is $\kappa$-complete.

\vspace{2mm}
Observe first that, by our definition of the domain $\mathfrak{A}$ (which is given in Section \ref{Construction of the domain}), one has that
\[\mathfrak{A} \subset U_{-1} = F_d\]
with $d = {\frac{1}{2}e^{-1}}$. By Property ($\mathcal F$) stated in Remark 3 of \cite{GSh2019} and restated in Section 2 above, we know that the domain $U_{-1} = F_{\frac{1}{2}e^{-1}}$, and hence also a smaller domain $\mathfrak{A}$, contains only discs of uniformly bounded size, say $r_0$. This implies that $\mathfrak{A}$ is Kobayashi hyperbolic. Since the domain $\mathfrak{A}$ is strictly pseudoconvex at each finite boundary point, it follows that $\mathfrak{A}$ is $\kappa$-complete at these points (see, for example, the arguments of Lemma 2.1.1 and Lemma 2.1.3 in \cite{Gaussier1999}). Hence, for proving that $\mathfrak{A}$ is $\kappa$-complete we only need to check that $\infty$ is a $\kappa'$-point of $\mathfrak{A}$.

Now suppose that $\{\zeta_j\}$ is a $\kappa$-Cauchy sequence converging to $\infty$. Then, by the definitions (\ref{def: Kobayashi metric}), (\ref{def: Kobayashi distance}) of the Kobayashi pseudodistance, and in view of the boundedness by $r_0$ of the size of holomorphic disks contained in $\mathfrak{A}$, we see that
\[\frac{1}{r_0}d_E(\zeta_k, \zeta_l) \leq \kappa_{\mathfrak{A}}(\zeta_k,\zeta_l)\]
for all $k, l \in \mathbb{N}$, where $d_E(\zeta, \xi)$ denotes the Euclidean distance between the points $\zeta, \xi \in {\mathbb{C}}^2$. The last inequality obviously implies that $\{\zeta_j\}$ is also a Cauchy sequence with respect to the Euclidean distance, and hence can not converge to $\infty$. This shows that $\infty$ is a $\kappa'$-point of $\mathfrak{A}$, and, therefore, proves the Kobayashi completeness of $\mathfrak{A}$.

\section{Nonexistence of nonconstant bounded holomorphic functions} \label{Nonexistence of nonconstant bounded holomorphic functions}

In this section we will show that the only bounded holomorphic functions defined in $\mathfrak{A}$ are constants. For proving this we will need the following version of the Liouville type theorem which was proved in \cite[Theorem 2.2]{HST-JGA} for a slightly different Wermer type set. The argument provided there can also be applied to the set $\mathcal{E}$ considered in our paper. But, since for this set there is a bit simpler proof, we will present it here for the reader convenience.

\begin{theorem}\label{Liouville type theorem}
 Let $\phi$ be a continuous plurisubharmonic function defined on an open neighborhood $V \subset \mathbb{C}^2$ of the Wermer type set $\mathcal{E}$. If $\phi$ is bounded from above, then $\phi \equiv C$ on $\mathcal{E}$ for some constant $C\in \mathbb{R}$.
 \end{theorem}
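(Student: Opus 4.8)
The plan is to reduce the statement to Liouville's theorem for subharmonic functions on $\mathbb{C}$ by projecting the given function along the ``vertical'' Cantor structure of $\mathcal{E}$, and then to upgrade the resulting fiberwise information to all of $\mathcal{E}$ by analytic continuation along the branched covering $\mathcal{E}\to\mathbb{C}_z$. Write $u$ for the function $\phi$ of the statement (to avoid clashing with the pluriharmonic function of Section \ref{Construction of a special Wermer type set}), so $u$ is plurisubharmonic, in particular upper semicontinuous, bounded above on a neighborhood $V\supset\mathcal{E}$, and set $M:=\sup_{\mathcal{E}}u<\infty$. Recall from the construction (and the fast decay of $\{\varepsilon_k\}$, cf. \cite{HST}) that $\mathcal{E}$ lies in a horizontal slab $\{|w|\le R\}$ and that
\[
\mathcal{E}=\bigcup_{\sigma\in\{+,-\}^{\mathbb{N}}}\Gamma_\sigma,\qquad \Gamma_\sigma=\bigl\{(z,g_\sigma(z)):z\in\mathbb{C}\bigr\},\quad g_\sigma(z)=\sum_{k=1}^{\infty}\sigma_k\,\varepsilon_k\sqrt{z-a_k},
\]
where over any simply connected $D\subset\mathbb{C}_z\setminus(\mathbb{Z}+i\mathbb{Z})$ the series splits into single-valued holomorphic branches depending continuously (uniformly on $D$) on $\sigma$ in the product topology, and each $\Gamma_\sigma$ is a one-dimensional analytic subvariety of $\mathbb{C}^2$ contained in $\mathcal{E}$, branched at most over $\mathbb{Z}+i\mathbb{Z}$.

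For $z\in\mathbb{C}_z$ put $\mathcal{E}_z:=\{w:(z,w)\in\mathcal{E}\}$, a nonempty compact subset of $\{|w|\le R\}$, and define
\[
u^{*}(z):=\max_{w\in\mathcal{E}_z}u(z,w).
\]
First I would prove that $u^{*}$ is subharmonic on all of $\mathbb{C}_z$ and bounded above by $M$. Finiteness and upper semicontinuity of $u^{*}$ follow from upper semicontinuity of $u$ together with the (immediate, from closedness of $\mathcal{E}$ and the slab bound) upper semicontinuity of the set-valued map $z\mapsto\mathcal{E}_z$. Subharmonicity is checked locally: over a disc $D\subset\mathbb{C}_z\setminus(\mathbb{Z}+i\mathbb{Z})$ one has $u^{*}|_D=\sup_\sigma v_\sigma$ with $v_\sigma(z):=u(z,g_\sigma(z))$ subharmonic on $D$ and the family locally uniformly bounded above, so the upper semicontinuous regularization of $u^{*}|_D$ is subharmonic; but $u^{*}|_D$ is itself upper semicontinuous, hence equals its regularization and is subharmonic. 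Over a disc $D_j$ around a single lattice point $a_j$ the same argument works after passing to the local two-sheeted cover $t\mapsto(a_j+t^{2},\cdot)$: a subharmonic function on the $t$-disc invariant under $t\mapsto -t$ descends to a subharmonic function on $D_j$, and the (redundantly regularized) supremum over the remaining sign choices is again subharmonic. Granting this, $u^{*}$ is subharmonic and bounded above on $\mathbb{C}$, hence constant by Liouville's theorem; since $\sup_{\mathbb{C}}u^{*}=\sup_{\mathcal{E}}u=M$, we conclude $u^{*}\equiv M$.

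Next I would promote $u^{*}\equiv M$ to $u\equiv M$ on $\mathcal{E}$. Fix $z_0\in\mathbb{C}\setminus(\mathbb{Z}+i\mathbb{Z})$; as $u^{*}(z_0)=M$ and $\mathcal{E}_{z_0}$ is compact, there is $w_0=g_{\sigma_0}(z_0)\in\mathcal{E}_{z_0}$ with $u(z_0,w_0)=M$. Let $R$ be the connected Riemann surface obtained by analytically continuing the holomorphic germ of $g_{\sigma_0}$ at $z_0$ along all paths in $\mathbb{C}\setminus(\mathbb{Z}+i\mathbb{Z})$, with tautological holomorphic map $\Pi:R\to\mathbb{C}^2$; then $\Pi(R)\subset\mathcal{E}$, and since encircling a single $a_j$ negates exactly the branch $\sqrt{z-a_j}$, the image $\Pi(R)$ is the union of those $\Gamma_\sigma$ for which $\sigma$ and $\sigma_0$ agree in all but finitely many coordinates (restricted to $z\notin\mathbb{Z}+i\mathbb{Z}$). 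The function $u\circ\Pi$ is subharmonic on the connected surface $R$, is $\le M$, and attains $M$ at the point given by the initial germ, so by the strong maximum principle $u\circ\Pi\equiv M$; that is, $u\equiv M$ on all these $\Gamma_\sigma$. This union is dense in $\mathcal{E}$: $\sigma\mapsto\Gamma_\sigma$ is continuous, the addresses differing from $\sigma_0$ in finitely many places are dense in $\{+,-\}^{\mathbb{N}}$, and the fibers of $\mathcal{E}$ over $\mathbb{Z}+i\mathbb{Z}$ are nowhere dense in $\mathcal{E}$. Since $u$ is upper semicontinuous and $u\le M$ on $\mathcal{E}$, density then forces $u\equiv M$ on $\mathcal{E}$, which is the assertion with $C=M$.

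I expect the main obstacle to be the subharmonicity of the fiberwise maximum $u^{*}$, and specifically its behavior across the branch locus $\mathbb{Z}+i\mathbb{Z}$: away from the lattice it reduces to the standard fact that an upper semicontinuous supremum of a locally bounded family of subharmonic functions is subharmonic, but at the lattice points one must verify compatibility with descent under the local double covers, which is exactly where the explicit local normal form of the Wermer set $\mathcal{E}$ enters. The remaining ingredients --- Liouville's theorem on $\mathbb{C}$, the strong maximum principle on a connected Riemann surface, and the density bookkeeping for the family $\{\Gamma_\sigma\}$ --- are soft.
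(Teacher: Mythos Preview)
Your proof is correct and follows the same two-step strategy as the paper: the fiberwise maximum $u^*(z)=\max_{w\in\mathcal{E}_z}u(z,w)$ is subharmonic on $\mathbb{C}_z$, hence constant equal to $M$ by Liouville, and then $u\equiv M$ is propagated along the holomorphic leaves of $\mathcal{E}$ to a dense set and thence to all of $\mathcal{E}$ by (upper semi)continuity. The implementations differ in two places. For the subharmonicity of $u^*$, the paper appeals to Slodkowski's theorem on analytic multifunctions (the complement of $\mathcal{E}$ being pseudoconvex), which dispatches your flagged branch-locus issue in one line; your direct argument is fine too, and in fact the lattice points can alternatively be handled by the removable-singularity theorem for bounded-above subharmonic functions across the polar set $\mathbb{Z}+i\mathbb{Z}$. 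For the propagation step, the paper does not pass to an abstract Riemann surface but uses the explicit splitting $\mathcal{E}=E_m\oplus\mathcal{E}_{m+1}$: since each $E_m$ is a connected algebraic curve and $\mathcal{E}_{m+1}$ is uniformly small on compacta as $m\to\infty$, one builds leafwise curves from a fixed $p$ with $u(p)=M$ to points $q_n^*\to q$ for an arbitrary target $q\in\mathcal{E}$. Your monodromy-orbit density argument is equivalent and somewhat tidier; the paper's version is more constructive. One small correction: $\mathcal{E}$ does not lie in a fixed horizontal slab $\{|w|\le R\}$, but the fibers $\mathcal{E}_z$ are \emph{locally} uniformly bounded in $z$ (by $\sum_k\varepsilon_k\sup_{K}|z-a_k|^{1/2}$ on compacta $K\subset\mathbb{C}_z$), and that is all your upper-semicontinuity argument for $u^*$ actually uses.
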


\begin{proof}

 By construction of the Wermer type set (see Section \ref{Construction of a special Wermer type set}), $\mathcal{E}$ is locally the limit in the Hausdorff distance of analytic sets $E_n$ and, therefore, the complement of $\mathcal{E}$ is pseudoconvex. Due to a theorem of Slodkowski, we know that $\mathcal{E}$ is an analytic multivalued function (see the definition of analytic multivalued functions and Slodkowski's theorem in \cite[pages 15-16]{Aupetit83}).

Now, for a given set $F \subset \mathbb{C}^2_{z,w}$ and any point $z_0\in \mathbb{C}_z$, let us define the vertical slice $F(z_0)$ of the set $F$ by
$$F(z_0):=\{w\in \mathbb{C}_w: (z_0, w)\in F\}.$$
Then for any function $\phi$, which is continuous and plurisubharmonic in a neighbourhood of the set $\mathcal{E}$, we can define the function
$$\tilde\phi(z_0):=\max_{w \in \mathcal{E}(z_0)}\phi(z_0, w).$$
It follows from part (ii) of the definition of analytic multivalued functions that $\tilde\phi$ is a subharmonic function in the complex plane $\mathbb{C}_z$. If the function $\phi$ is further assumed to be bounded from above (without loss of generality we may assume that on the set $\mathcal{E}$ one has $\phi\leqslant 0$ and $\sup \phi=0$), the standard Liouville theorem asserts that $\tilde\phi\equiv 0$ on $\mathbb{C}_z$.

The rest of the proof will be devoted to showing that even for the initial function $\phi$ one also has that $\phi\equiv 0$.
In order to do this we first define the set
$$\mathcal{A}:=\{p\in\mathcal{E}: \phi(p)=0, p_z := \pi_z(p) \notin \mathbb{Z}+ i \mathbb{Z}\},$$
where $\pi_z : \mathbb{C}^2_{z,w} \to \mathbb{C}_z$ is the canonical projection. Then we observe that for every point $p = (p_1, p_2) \in \mathcal{A}$ and every analytic disc $D_r(p) := \{(z, f(z)): z \in \Delta_r(p_1) \} \subset \mathcal{E} $ around $p$ such that $\Delta_r(p_1) \subset \mathbb{C}_z \setminus \{\mathbb{Z}+ i \mathbb{Z}\}$, $p=(p_1, f(p_1))$ and $f : \Delta_r(p_1) \to \mathbb{C}_w$ holomorphic, the function $\phi(z, f(z))$ is subharmonic on $\Delta_r(p_1) \subset \mathbb{C}_z$. Here $\Delta_r(p_1)$ denote the disc of radius $r$ centered at $p_1$. Since $\phi|_\mathcal{E} \leqslant 0$ and $\phi(p) = 0$, we conclude from the maximum principle that $\phi(z, f(z))$ is identically equal to zero on $\Delta_r(p_1)$ and, therefore, the set $\mathcal{A}$ is open in the topology defined along the leaves of $\mathcal{E}$. 

More precisely, we say that a curve $\eta : [0, 1] \to \mathcal{E}$ {\em follows the leaves of } $\mathcal{E}$ if for each $t \in [0 ,1]$ there exist $\varepsilon > 0$, $r > 0$ and a holomorphic function $f : \Delta_r(\pi_z(\eta(t))) \to \mathbb{C}_w$ such that $\Gamma(f) := \{(z ,f(z)) \in \mathbb{C}^2_{z,w} : z \in \Delta_r(\pi_z(\eta(t)))\} \subset \mathcal{E}$ and for each $t' \in (t - \varepsilon, t + \varepsilon)$ (or, $t' \in [0 , t + \varepsilon)$ for $t = 0$ and $t' \in (1 - \varepsilon, 1]$ for $t = 1$, respectively) one has that $\eta(t') \in \Gamma(f)$.

\vskip0.3cm
\noindent
{\bf Claim 1.} If $\eta : [0, 1] \to \mathcal{E}$ is a continuous curve which follows the leaves of $\mathcal{E}$ and such that $\phi(\eta(0)) \in \mathcal{A}$ and $\pi_z(\eta(t)) \notin \mathbb{Z}+ i \mathbb{Z}$ for all $t \in [0, 1]$, then $\phi(\eta(1)) \in \mathcal{A}$.

\vskip0.3cm

The claim easily follows if we cover the curve $\eta([0, 1])$ by suitable finite family of analytic discs $D_1, D_2, ... ,D_l \subset \mathcal{E}$ and apply the maximum principle to the restriction of $\phi$ to each of these discs.

Now, in analogy to the definitions (\ref{eq1}) and (\ref{eq2}) of the sets $E_n$ and $\mathcal{E}$, respectively, for each $m \in \mathbb{N}$ and each $n \geq m$ we can consider the set
$$E_{m,n}:=\left\{(z,w)\in \mathbb{C}^2: w=\varepsilon_m\sqrt{z-a_m} + ... + \varepsilon_n\sqrt{z-a_n}\right\}$$
and then define the set
$$	\mathcal{E}_m=\lim_{n\to\infty}E_{m,n}=\left\{b\in\mathbb{C}^2_{z,w}: \exists \,\, b_n\in E_{m,n}, ~n=m,m+1, ... , \text{with}~ b=\lim_{n\to\infty}b_n\right\}.$$
It is easy to see from these definitions that for each $m \in \mathbb{N}$ we have that
\begin{align}\label{eq8}
\mathcal{E}= E_m  \oplus \mathcal{E}_{m+1} := \{(z, w_1 + w_2) \in \mathbb{C}^2_{z,w}: w_1 \in E_m(z), w_2 \in \mathcal{E}_{m+1}(z) \},
\end{align}
and then, from the construction of the set $\mathcal{E}$ (more precisely, from the choice of the sequence $\{\varepsilon_n\}$ in this construction) we also conclude that, if for every compact set $K \subset \mathbb{C}_z$ and every $m \in \mathbb{N}$ we define
$$\delta_K(m) := \max_{z \in K}{\max_{w \in \mathcal{E}_{m}(z)} |w|},$$
then
\begin{align}\label{eq9}
\lim_{m \to \infty} \delta_K(m) = 0.
\end{align}

Next, for a fixed $m \in \mathbb{N}$, we define the notion of the {\it lifting} $\tilde{\gamma}$ of a curve $\gamma \subset \mathbb{C}_z$ to the set $\mathcal{E}_m$. Let $p$ be a point of $\mathcal{E}_m$ such that $\pi_z(p) \notin \mathbb{Z}+ i \mathbb{Z}$. Then for some $r > 0$ we have that $\Delta_r(\pi_z(p)) \subset \mathbb{C}_z \setminus \{\mathbb{Z}+ i \mathbb{Z}\}$ and, hence, the set $\mathcal{E}_m \cap (\Delta_r(\pi_z(p)) \times \mathbb{C}_w)$ can be represented as the union of the graphs $\{\Gamma(f_\alpha)\}_{\alpha \in \mathcal{B}}$ of holomorphic functions $\{f_\alpha : \Delta_r(\pi_z(p)) \to \mathbb{C}_w\}_{\alpha \in \mathcal{B}}$. Let $f_{\alpha(p)}$ be a function of this family such that its graph $\Gamma(f_{\alpha(\zeta)})$ contains the point $p$ and let $\gamma: [0,1]\to \mathbb{C}_z\setminus \{\mathbb{Z}+i \mathbb{Z}\}$ be a continuous curve such that $\pi_z(p)=\gamma(0)$. Then we define the lifting of the curve $\gamma$ with the initial data $f_{\alpha(p)}$ as follows:

We divide the segment $[0,1]$ into small enough segments by points $0 = t_0 < t_1 < ... < t_k =1$ and then consider a family of disks $\{\Delta_{r_s}(\gamma(t_s))\}_{1 \leq s \leq k}$ such that for each $s = 1, 2, ..., k$ one has $\Delta_{r_s}(\gamma(t_s)) \subset \mathbb{C}_z \setminus \{\mathbb{Z}+ i \mathbb{Z}\}$ and $\gamma([t_{s - 1}, t_s]) \subset \Delta_{r_s}(\gamma(t_s))$ (in particular, $\{\Delta_{r_s}(\gamma(t_s)\}_{1 \leq s \leq k}$ is a covering of $\gamma([0, 1])$). Observe that, as above, for each $s = 1, 2, ..., k$ the set $\mathcal{E}_m \cap (\Delta_{r_s}(\gamma(t_s)) \times \mathbb{C}_w)$ can be represented as the union of the graphs $\{\Gamma(f_\alpha^s)\}_{\alpha \in \mathcal{B}}$ of holomorphic functions $\{f_\alpha^s : \Delta_{r_s}(\gamma(t_s)) \to \mathbb{C}_w\}_{\alpha \in \mathcal{B}}$. Note now that, in view of the construction of the set $\mathcal{E}_m$ and the unicity theorem for holomorphic functions, there exists exactly one function $f_{\alpha(p)}^1$ in the family $\{f_\alpha^1 : \Delta_{r_1}(\gamma(t_1)) \to \mathbb{C}_w\}_{\alpha \in \mathcal{B}}$ which coincides with $f_{\alpha(p)}$ on the set $\Delta_{r_0}(\gamma(t_0)) \cap \Delta_{r_1}(\gamma(t_1))$ (without loss of generality we can assume here that $r_0 < r$, and hence the function $f_{\alpha(p)}$ is defined on the disk $\Delta_{r_0}(\gamma(t_0))$). Then we proceed inductively and, if for some $2 \leq s \leq k$ the function $\{f_{\alpha(p)}^{s - 1} : \Delta_{r_{s - 1}}(\gamma(t_{s - 1})) \to \mathbb{C}_w\}$ is already chosen, we consider the (uniquely defined) function from the family $\{f_\alpha^s : \Delta_{r_s}(\gamma(t_s)) \to \mathbb{C}_w\}_{\alpha \in \mathcal{B}}$ which coincides with $f_{\alpha(p)}^{s - 1}$ on the set $\Delta_{r_{s - 1}}(\gamma(t_{s - 1})) \cap \Delta_{r_s}(\gamma(t_s))$ and denote it by $f_{\alpha(p)}^s$. Now we can finally define the lifting $\tilde{\gamma}$ of the curve $\gamma$ to the set $\mathcal{E}_m$. Namely, for each $s = 1, 2, ..., k$ and each $\theta \in [t_{s - 1}, t_s]$ we set
$$\tilde{\gamma}(\theta) := ({\gamma}(\theta), f_{\alpha(p)}^s({\gamma}(\theta))).$$

To finish the proof of the theorem we need to show that for an arbitrary point $q \in \mathcal{E}$ one has $\phi(q) = 0$. In view of continuity of the function $\phi$, it is enough to prove this claim for points $q$ such that $\pi_z(q) \notin \mathbb{Z}+ i \mathbb{Z}$. Let us fix a point $q$ with these properties and let $p$ be a point of the set $\mathcal{A}$, i.e. $\phi(p) = 0$ and $\pi_z(p) \notin \mathbb{Z}+ i \mathbb{Z}$. Fix $R > 0$ so large that $\pi_z(p), \pi_z(q) \in \Delta_R(0)$ and set $K := \overline{\Delta}_R(0)$. Then, by property (\ref{eq9}) above, for each $n \in \mathbb{N}$ there is $m_n \in \mathbb{N}$ such that
\begin{align}\label{eq10}
\max_{z \in K}{\max_{w \in \mathcal{E}_{m}(z)} |w|} < \frac{1}{2^n}
\end{align}
for all $m \geq m_n$. Further, in view of property (\ref{eq8}), we can find points $p_n, q_n \in E_{m_n}$ such that $\pi_z(p_n) = \pi_z(p)$, $\pi_z(q_n) = \pi_z(q)$ and
\begin{align}\label{eq11}
\tilde{p}_n := (\pi_z(p_n), \pi_w(p) - \pi_w(p_n)) \in \mathcal{E}_{m_n + 1},
\end{align}
\begin{align}\label{eq12}
\tilde{q}_n := (\pi_z(q_n), \pi_w(q) - \pi_w(q_n)) \in \mathcal{E}_{m_n + 1},
\end{align}
where $\pi_w :  \mathbb{C}^2_{z, w} \to \mathbb{C}_w$ is the canonial projection. Since the set $E_{m_n} \setminus (\{\mathbb{Z}+ i \mathbb{Z}\} \times \mathbb{C}_w)$ is obviously connected, there is a continuous curve $\eta_n : [0, 1] \to E_{m_n}$ such that $\eta_n(0) = p_n$, $\eta_n(1) = q_n$ and $\pi_z(\eta_n(t)) \notin \mathbb{Z}+ i \mathbb{Z}$ for all $t \in [0, 1]$. Let $\gamma_n : [0, 1] \to \mathbb{C}_z \setminus \{\mathbb{Z}+ i \mathbb{Z}\}$ be a curve defined by $\gamma_n(t) := \pi_z(\eta_n(t))$. Consider now some initial data $f_{\alpha(\tilde{p}_n)}$ for the lifting of the curve $\gamma_n$ to the set $\mathcal{E}_{m_n + 1}$, that is, consider a number $r > 0$ and a holomorphic function $f_{\alpha(\tilde{p}_n)} : \Delta_r(\pi_z(p_n)) \to \mathbb{C}_w$ such that its graph $\Gamma(f_{\alpha(\tilde{p}_n)})$ contains the point $\tilde{p}_n$ and is contained in the set $\mathcal{E}_{m_n + 1}$. In the case when such initial data is not unique, one can just choose an arbitrary one. Then, in view of the construction above of the lifting of a curve, we will get a lifting $\tilde{\gamma}_n$ of the curve $\gamma_n$ to the set $\mathcal{E}_{m_n + 1}$ with the initial point $\tilde{\gamma}_n(0) = \tilde{p}_n$ and an endpoint $\tilde{\gamma}_n(1) =: \tilde{\tilde{q}}_n \in \mathcal{E}_{m_n + 1}(\pi_z(q))$. Now we can finally define the curve $\gamma_n^\ast : [0, 1] \to \mathcal{E} \setminus (\{\mathbb{Z}+ i \mathbb{Z}\} \times \mathbb{C}_w)$ by
$$\gamma_n^\ast(t) := (\gamma_n(t), \pi_w(\eta_n(t)) + \pi_w(\tilde{\gamma}_n(t)))$$
and observe that, by our construction and property (\ref{eq11}), one has that
\begin{equation} \label{eq13}
\begin{split}
\gamma_n^\ast(0) &= (\pi_z(\eta_n(0)), \pi_w(\eta_n(0)) + \pi_w(\tilde{\gamma}_n(0))) = (\pi_z(p_n), \pi_w(p_n) + \pi_w(\tilde{p}_n))\\
&= (\pi_z(p), \pi_w(p)) = p
\end{split}
\end{equation}
and, by property (\ref{eq12}), one also has that
\begin{equation} \label{eq14}
\begin{split}
\gamma_n^\ast(1) &= (\pi_z(\eta_n(1)), \pi_w(\eta_n(1)) + \pi_w(\tilde{\gamma}_n(1))) = (\pi_z(q_n), \pi_w(q_n) + \pi_w(\tilde{\tilde{q}}_n)\\
&= (\pi_z(q), \pi_w(q) - \pi_w(\tilde{q}_n) + \pi_w(\tilde{\tilde{q}}_n)) =: q_n^\ast.
\end{split}
\end{equation}
Since, by properties (\ref{eq13}) and (\ref{eq14}), $\gamma_n^\ast$ is a curve in $\mathcal{E} \setminus (\{\mathbb{Z}+ i \mathbb{Z}\} \times \mathbb{C}_w)$ connecting the points $p$ and $q_n^\ast$, and since, by the choice of $p$, one has that $\phi(p) = 0$, we can conclude from Claim 1 that $\phi(q_n^\ast) = 0$. But properties (\ref{eq10}), (\ref{eq12}), (\ref{eq14}) and the fact that $\tilde{\tilde{q}}_n \in \mathcal{E}_{m_n + 1}(\pi_z(q))$ imply that
$$\|q - q_n^\ast\| = |\pi_w(\tilde{q}_n) - \pi_w(\tilde{\tilde{q}}_n)| < |\pi_w(\tilde{q}_n)| + |\pi_w(\tilde{\tilde{q}}_n)| < \frac{1}{2^{n - 1}}.$$
Hence, by continuity of $\phi$, we finally have that $\phi(q) = \lim_{n \to \infty}\phi(q_n^\ast) = 0$. This concludes the proof of our Liouville type theorem.

\end{proof}

Let us now consider an arbitrary bounded holomorphic function $f$ on the domain $\mathfrak{A}$. Without loss of generality, we can assume that $|f| < 1$ on $\mathfrak{A}$. Then the following statement holds true.

\vspace{2mm}
\noindent
\textbf{Claim 2.} The restriction $f|_{\mathcal{E}}$ of the function $f$ to the Wermer type set $\mathcal{E} \subset \mathfrak{A}$ is constant.

\vspace{2mm}

\begin{proof}
Due to the last theorem, we know that $|f|\equiv r_0$ on $\mathcal{E}$ for some constant $r_0<1$. Hence, we can write $f(z,w)=r_0\exp(i\theta),$ where $\theta\in [0, 2\pi]$, in principle, might be different for different values $z$ and $w$. Consider now the function $g:=|1-f|$. Then $g$ is also bounded continuous and plurisubharmonic on some neighborhood of $\mathcal{E}$, hence, by the last theorem, $|g|$ is also constant (say $C^\ast$) on $\mathcal{E}$. A simple calculation shows that there are at most two solutions $\theta_0$ and $2\pi-\theta_0$ to the equation $|1-r_0\exp(i\theta)|= C^\ast$. It follows then from holomorphicity (and, hence, continuity) of $f$ and connectedness of $\mathcal{E}$ that the restriction $f|_{\mathcal{E}}$ of $f$ to $\mathcal{E}$ is constant. This completes the proof of Claim 2.
\end{proof}

Now we fix $z=z_0$ and consider the slice $\mathfrak{A}_{z_0} := \mathfrak{A} \cap \{z=z_0\}$. Since, by construction, the core $\mathcal{E}$ intersected with $\mathfrak{A}_{z_0}$ is a Cantor set, it follows that there is a point $w_0$ and a sequence of points $w_j$ converging to $w_0$ such that $\zeta_j = (z_0, w_j) \in \mathcal{E} \cap \mathfrak{A}_{z_0}$ for each $j \in \mathbb N$. Then, by Claim 2, there is a constant $C$ such that $f|_{\mathcal{E}} \equiv C$. In particular, we have that $f(\zeta_j) = C$ for each $j \in \mathbb N$. The standard one-dimensional identity theorem for holomorphic functions tells us now that $f = C$ on the whole slice $\mathfrak{A}_{z_0}$. Since the same argument holds true for each $z_0$, we finally conclude that $f \equiv C$ on  $\mathfrak{A}$, which completes the proof of the main statement of this Section.


\vspace{2mm}
The next statement follows directly from nonexistence of bounded holomorphic functions.

\begin{corollary}
{{ The domain $\mathfrak{A}$ has the following properties:
 \begin{enumerate}
     \item $\,\,$  $\mathfrak{A}$ can not be biholomorphic to a bounded domain.
     \item The Carath\'eodory metric on $\mathfrak{A}$ is identically equal to zero.
 \end{enumerate}}}

 \end{corollary}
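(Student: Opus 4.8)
The plan is to deduce both statements as immediate consequences of the fact, established in the previous section, that every bounded holomorphic function on $\mathfrak{A}$ is constant.

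For part (1), I would argue by contradiction. Suppose there were a biholomorphism $\Phi : \mathfrak{A} \to D$ onto a bounded domain $D \subset \mathbb{C}^2$. Writing $\Phi = (\Phi_1, \Phi_2)$ with the coordinate functions $\Phi_1, \Phi_2 \in \mathcal{O}(\mathfrak{A})$, boundedness of $D$ forces each $\Phi_j$ to be a bounded holomorphic function on $\mathfrak{A}$, hence constant by the main result of Section \ref{Nonexistence of nonconstant bounded holomorphic functions}. But then $\Phi$ is a constant map, contradicting the assumption that $\Phi$ is a biholomorphism (in particular injective, and $\mathfrak{A}$ is not a single point). This proves (1).

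For part (2), recall that the Carath\'eodory pseudometric at a point $\zeta \in \mathfrak{A}$ in a direction $v \in \mathbb{C}^2$ is
\[
c_{\mathfrak{A}}(\zeta, v) := \sup\{ |df_\zeta(v)| : f \in \mathcal{O}(\mathfrak{A}, \Delta),\ f(\zeta) = 0 \},
\]
and similarly the Carath\'eodory pseudodistance between two points is the supremum of $\kappa_\Delta(f(\zeta_1), f(\zeta_2))$ over all $f \in \mathcal{O}(\mathfrak{A}, \Delta)$. Since any $f \in \mathcal{O}(\mathfrak{A}, \Delta)$ is in particular a bounded holomorphic function on $\mathfrak{A}$, it must be constant; hence $df_\zeta \equiv 0$ and $f(\zeta_1) = f(\zeta_2)$ for all competitors $f$. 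Taking the supremum, $c_{\mathfrak{A}} \equiv 0$ both as an infinitesimal metric and as a pseudodistance. This establishes (2), and in particular shows that $\mathfrak{A}$ is not Carath\'eodory hyperbolic, giving property (4) of the Main Theorem.

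There is no real obstacle here: the entire content is packed into the Liouville-type result already proven, and the Corollary is a routine unwinding of definitions. The only point to be slightly careful about is phrasing part (1) so that it is clear a nonconstant biholomorphism cannot have all coordinate functions constant — which is immediate since a constant map is not injective while $\mathfrak{A}$ contains more than one point.
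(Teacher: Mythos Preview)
Your proof is correct and matches the paper's approach exactly: the paper simply states that the corollary ``follows directly from nonexistence of bounded holomorphic functions'' and gives no further argument, so your unwinding of the definitions is precisely the intended reasoning, just made explicit.
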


\vspace{3mm}
\noindent
{\bf Acknowledgement.} {\it Part of this work was done while the first author was a visitor at the Capital Normal University (Beijing). It is his pleasure to thank this institution for its hospitality and good working conditions.}

\end{document}